\theoremstyle{definition}
\newtheorem{example}{Example}%[section]
\theoremstyle{plain}
\newtheorem{lemma}[example]{Lemma}
\newtheorem{proposition}[example]{Proposition}
\newtheorem{theorem}[example]{Theorem}
\theoremstyle{remark}
\newcommand{\bE}{\mathbf{E}}
\newcommand{\bU}{\mathbf{U}}
\newcommand{\bD}{\mathbf{D}}
\newcommand{\bQ}{\mathbf{Q}}
\newcommand{\bT}{\mathbf{T}}
\newcommand{\be}{\mathbf{e}}
\newcommand{\bI}{\mathbf{I}}
\newcommand{\bzero}{\mathbf{0}}
\newcommand{\bC}{\mathbf{C}}
\newcommand{\bP}{\mathbf{P}}
\newcommand{\bB}{\mathbf{B}}
\newcommand{\bA}{\mathbf{A}}
\newcommand{\bu}{\mathbf{u}}
\newcommand{\eps}{\epsilon}
\newcommand{\bv}{\mathbf{v}}
\newcommand{\E}{\mathbb E}
\begin{document}
%\maketitle
%\section{}
%\subsection{}

\title{Absolute continuity of the limiting eigenvalue distribution of the random Toeplitz matrix}
\author{Arnab Sen and B\'{a}lint Vir\'{a}g}
%Center for Mathematical Sciences\\
%University of Cambridge\\
%Wilberforce Road\\
%Cambridge \\
%\email{a.sen@statslab.cam.ac.uk}
%and
%\address{}
%\email{balint@math.toronto.edu}

\date{}
%\subjclass[2000]{60F25, 60B20}
%\keywords{random Toeplitz, maximum eigenvalue, spectral norm, sine kernel}

\maketitle

\begin{abstract}
We show that the limiting eigenvalue distribution of random symmetric Toeplitz matrices is absolutely continuous with density bounded by $8$,  partially answering a question of \cite{Bryc06}. The main tool used in the proof is a spectral averaging technique from the theory of random Schr\"{o}dinger operators.
The similar question for Hankel matrices remains open.
\end{abstract}

\section{Introduction}
%\subsection{Backround}

An $n \times n$ symmetric random  Toeplitz matrix is given by $$ \bT_{n} = ((a_{|j - k|}))_{ 0 \le  j, k \le n}$$
where $(a_j)_{  j \ge 0}$ is a sequence of
i.i.d.\ random variables with $\mathrm{Var}(a_0)=1$. For a $m \times m$ Hermitian matrix $\bA$, we denote by 
$$\mu(\bA): = \frac{1}{m} \sum_{i=1}^m \delta_{\lambda_i }$$ 
the empirical eigenvalue distribution of $\bA$, where $\lambda_j, 1 \le j \le m$  
are the eigenvalues of  $\bA$, counting multiplicity.
\citet*{Bryc06} established using method of moments that  with probability $1$, $\mu(n^{-1/2}\bT_n)$  converges weakly as $n \to \infty$ to a nonrandom symmetric probability measure $\gamma$ which does not depend on the distribution of $a_0$, and has unbounded support. They conjecture (see Remark 1.1 there) that $\gamma$ has a smooth density.
In this note, we give a partial solution:

\begin{theorem}\label{thm:main}
The measure $\gamma$ is absolutely continuous with density bounded by $8$.
\end{theorem}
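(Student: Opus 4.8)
The plan is to establish the equivalent concentration bound $\gamma([\lambda,\lambda+\eps])\le 8\eps$ for all $\lambda\in\mathbb R$ and $\eps>0$; this forces $\gamma$ to have no atom and an $8$-Lipschitz distribution function, hence to be absolutely continuous with density at most $8$. Since $\gamma=\lim_n\mu(n^{-1/2}\bT_n)$ weakly, and by \citet{Bryc06} also in the mean, it suffices to bound $(n+1)^{-1}\E\,\mathrm{tr}\,\mathbf{1}_{[\lambda,\lambda+\eps]}(n^{-1/2}\bT_n)$ by $8\eps+o(1)$ uniformly in $\lambda$, i.e.\ to prove a Wegner-type (integrated density of states) estimate. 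The tool for this is spectral averaging: if $A=A^\ast$, $P$ is an orthogonal projection, $\psi$ is a unit vector in the range of $P$, and $A_t=A+tP$, then $\int_{\mathbb R}\langle\psi,\mathbf{1}_S(A_t)\psi\rangle\,dt=|S|$ for every Borel $S\subset\mathbb R$; consequently if the coupling $t$ has a density $\rho$, then $\E_t\langle\psi,\mathbf{1}_S(A_t)\psi\rangle\le\|\rho\|_\infty|S|$. The obstruction --- and the reason the lemma cannot be applied naively --- is one of scale: an individual entry $a_m$ has variance $1$ but perturbs $n^{-1/2}\bT_n$ only by a matrix of norm $O(n^{-1/2})$, which smears the spectrum on a window that shrinks with $n$; what is needed is a ``disorder coordinate'' of variance of order $n$ whose associated perturbation matrix is a projection.

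These coordinates are the Fourier modes of the sequence $(a_m)$. With $\bT_n=\sum_{m=0}^{n}a_m\bB_m$, where $\bB_m=((\,\mathbf{1}[|j-k|=m]\,))_{j,k=0}^{n}$, the elementary identity
$$\textstyle\sum_{m=0}^{n}\cos(m\theta)\,\bB_m=\big(\cos((j-k)\theta)\big)_{j,k=0}^{n}=c_\theta\, c_\theta^{\top}+s_\theta\, s_\theta^{\top},\qquad c_\theta=(\cos(j\theta))_{j},\ \ s_\theta=(\sin(j\theta))_{j},$$
shows that displacing the entries $a_m\mapsto a_m+\delta\cos(m\theta)$ perturbs $\bT_n$ by the \emph{positive semidefinite} rank-two matrix $\delta\,(c_\theta c_\theta^{\top}+s_\theta s_\theta^{\top})$. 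For the discrete frequencies $\theta_\ell=2\pi\ell/(n+1)$ the vectors $c_{\theta_\ell},s_{\theta_\ell}$ are orthogonal with common squared length $(n+1)/2$ (away from the two exceptional frequencies with $2\ell\equiv 0$), so $P_\ell:=\tfrac{2}{n+1}(c_{\theta_\ell}c_{\theta_\ell}^{\top}+s_{\theta_\ell}s_{\theta_\ell}^{\top})$ is a rank-two orthogonal projection, and after normalization the family $\{c_{\theta_\ell},s_{\theta_\ell}\}_\ell\cup\{\1\}$ is an orthonormal (real discrete Fourier) basis of $\mathbb R^{n+1}$. Invoking the distribution-freeness of $\gamma$ to replace the $a_m$ by i.i.d.\ Gaussians, the modes $g_\ell=\sum_m a_m\cos(m\theta_\ell)$ and $g'_\ell=\sum_m a_m\sin(m\theta_\ell)$ become mutually independent centred Gaussians of variance $\asymp n$, and for each fixed $\ell$ one obtains the orthogonal decomposition $\bT_n=g_\ell\, P_\ell + A_\ell$ with $A_\ell$ a function of the remaining modes only. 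Hence $n^{-1/2}\bT_n = t\,P_\ell + n^{-1/2}A_\ell$, where $t=n^{-1/2}g_\ell$ has bounded density and $n^{-1/2}A_\ell$ is independent of $t$.

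I then apply the spectral averaging lemma mode by mode: since $c_{\theta_\ell}$ and $s_{\theta_\ell}$ lie in the range of the projection $P_\ell$, averaging over $g_\ell$ gives $\E\langle\widehat{c}_{\theta_\ell},\mathbf{1}_S(n^{-1/2}\bT_n)\widehat{c}_{\theta_\ell}\rangle\le\|\rho_t\|_\infty\,|S|$ and likewise for $\widehat{s}_{\theta_\ell}$, uniformly in the values of the other modes. Summing these bounds over the orthonormal Fourier basis and dividing by $n+1$ recovers $(n+1)^{-1}\E\,\mathrm{tr}\,\mathbf{1}_S(n^{-1/2}\bT_n)$ on the left, and so yields $\gamma(S)\le\big(\sup_\ell\|\rho_t\|_\infty\big)\,|S|$; tracking the Gaussian density (variance $\asymp 1$ after the $n^{-1/2}$ scaling), together with the contribution of the finitely many exceptional, rank-one modes, leads to the stated numerical bound $8$ (which is certainly not optimal).

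The crux, and the step that singles out the Toeplitz case, is the positivity of the Fourier perturbation. The addition formula $\cos((j-k)\theta)=\cos(j\theta)\cos(k\theta)+\sin(j\theta)\sin(k\theta)$ makes $\sum_m\cos(m\theta)\bB_m$ positive semidefinite, which is exactly what spectral averaging requires: a sign-indefinite perturbation does not move the spectrum monotonically and the averaging identity $\int\langle\psi,\mathbf{1}_S(A_t)\psi\rangle\,dt=|S|$ fails. For the Hankel matrix $((a_{j+k}))$ the identical computation produces $\sum_m\cos(m\theta)\bH_m=\big(\cos((j+k)\theta)\big)=c_\theta c_\theta^{\top}-s_\theta s_\theta^{\top}$, which is \emph{indefinite}, so the scheme does not transfer --- in accordance with the Hankel problem remaining open. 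The remaining points (the Gaussian reduction used to decouple the modes, the handling of the exceptional frequencies, and the uniformity in $\lambda$ of the $o(1)$ errors) are routine and are not expected to cause trouble.
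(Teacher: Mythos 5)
Your argument is correct in substance, and it takes a genuinely different route from the paper. The paper never works with $\bT_n$ directly: it doubles the diagonal entry, embeds the modified Toeplitz matrix into a $2n\times 2n$ circulant, conjugates by the discrete Fourier transform to obtain $\bP\bD\bP$ with $\bD$ diagonal Gaussian and $\bP$ a projection with $\bP(j,j)=1/2$, expresses the Stieltjes transform of the expected spectral distribution through the vectors $\bP\be_j$, and then averages over each $d_j$ using the Combes--Hislop differential-inequality estimate (their Proposition, proved in the appendix), which costs the factor $\|g\|_1+\|g'\|_1+\|g''\|_1$ and the loss $\bP(j,j)^{-2}=4$, giving the bound $16\sqrt2/\pi$. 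You instead stay with $\bT_n$ itself, expand the coefficient sequence $(a_m)$ in the real discrete Fourier basis at frequencies $2\pi\ell/(n+1)$, and use the exact identity $\sum_m\cos(m\theta)\bB_m=c_\theta c_\theta^{\top}+s_\theta s_\theta^{\top}$, which makes the cosine-mode direction an \emph{exact} multiple of a rank-two orthogonal projection: indeed $\bT_n=g_\ell P_\ell+A_\ell$ holds exactly with $g_\ell\sim N(0,(n+1)/2)$ independent of $A_\ell$, so after the $n^{-1/2}$ scaling the coupling constant has density bounded by $1/\sqrt\pi$ uniformly in $n,\ell$. Applying one-parameter spectral averaging to the basis vectors in the range of $P_\ell$ and summing over the orthonormal Fourier basis bounds the expected integrated density of states directly, which both avoids the circulant embedding and the $\sqrt2$ diagonal trick and yields a constant ($1/\sqrt\pi$, far below $8$) better than the paper's; the same positivity phenomenon underlies both proofs, but your implementation extracts it in a cleaner form. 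The one ingredient you must not leave as a bare assertion is the averaging identity $\int_{\mathbb R}\langle\psi,\mathbf 1_S(A+tP)\psi\rangle\,dt=|S|$ for a projection $P$ of rank \emph{two} and $\psi$ in its range: the textbook statement is the rank-one formula, and the paper resorts to the heavier Combes--Hislop machinery precisely because its perturbations $\bP\bE_j\bP$ are not projections. Your version is true and not hard in finite dimensions (write $P(A+tP-z)^{-1}P=(M(z)+t)^{-1}$ via the Schur complement, note $\mathrm{Im}\,M(z)\le-\mathrm{Im}(z)$, and integrate in $t$ by a contour argument to get $i\pi I$ on the range of $P$, then invert the Poisson transform), but you need to prove it or cite it precisely; the inequality form $\E_t\langle\psi,\mathbf 1_S\psi\rangle\le\|\rho\|_\infty|S|$, which is all you use, then follows. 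The remaining steps you label routine (Gaussian reduction, exceptional frequencies, passage from $\E\mu(n^{-1/2}\bT_n)$ to $\gamma$ via portmanteau on open intervals) are indeed routine.
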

The actual bound we get is  $\frac{16\sqrt 2}{\pi} = 7.20 \ldots$, but we do not expect it to be optimal.

It seems that the method of moments is of little use in determining the existence of the absolute continuity of the limiting eigenvalue distribution.  Indeed our proof goes along a completely different path. We make use of the fact that the spectrum of the Gaussian Toeplitz matrix can be realized as that of some diagonal matrix consisting of  independent Gaussians   conjugated by an appropriate projection matrix -  a fact observed in a recent paper \cite{SenVirag11}.   The next key ingredient of our proof is a spectral averaging technique
(Wegner type estimate) developed by \cite{Combes96} in connection to the problem of localization for certain families of random Schr\"{o}dinger operators.

Our proof does not establish further smoothness property of $\gamma$. The absolute continuity  for the limiting distribution of random Hankel matrices also remains open.

\section{Connection between Toeplitz and circulant matrices}
Since $\gamma$ does not depend on the distribution of $a_0$, from now on, we will assume, without any loss, that $(a_i)_{i \ge 0}$ are i.i.d.\  standard Gaussian random variables.  The remainder of the section we recall some facts about the connection between Toeplitz matrices and circulant matrices from \cite{SenVirag11}. Let $\bT_n^\circ$ be the symmetric Toeplitz matrix  which has  $\sqrt{2} a_0$ on its diagonal instead of $a_0$. It can be easily shown (e.g.\ using  Hoffman-Wielandt inequality, see \cite{Bhatia97}) that this modification has no effect as far as  the limiting eigenvalue distribution is concerned.

 $\bT^\circ_n$  is the $n \times n$ principal
submatrix of a $2n \times 2n$  circulant matrix $\bC_{2n}=
(b_{j-i \;\mathrm{ mod }\;2n})_{0 \le i, j \le 2n-1}$,
where $b_j= a_j$ for $ 0 < j < n$ and $b_j  = a_{2n -j}$
for $n<  j < 2n$,  $b_ 0 = \sqrt 2 a_0, b_n = \sqrt 2 a_n$. In other words,
\begin{equation}\label{eq:circulant_toeplitz}
 \bQ_{2n} \bC_{2n} \bQ_{2n}=  \begin{pmatrix}
\bT_n^\circ & \bzero_n\\
\bzero_n & \bzero_n
\end{pmatrix},
\quad \text{ where }
\bQ_{2n} =
\begin{pmatrix}
\bI_{n} & \bzero_n\\
\bzero_n & \bzero_n
\end{pmatrix}.
\end{equation}
The circulant matrix can be easily diagonalized as $  (2n)^{-1/2}\bC_{2n} = \bU_{2n} \bD_{2n} \bU_{2n}^*$
where  $\bU_{2n}$ is the discrete Fourier transform, i.e.\
a unitary matrix given by
\[\bU_{2n}(j, k)  = \frac{1}{\sqrt{2n}}\exp \left(  \frac{2 \pi i jk}{2n}\right),  0 \le j, k \le 2n-1\] and $\bD_{2n} = \mathrm{diag}(d_0, d_1, \ldots, d_{2n-1})$, where
\[ d_j = \frac{1}{\sqrt{2n}} \sum_{k=0}^{2n-1} b_k \exp \left(  \frac{2 \pi i  jk}{2n}\right) = \frac{1}{\sqrt{2n}} \left [ \sqrt 2 a_0 + (-1)^n \sqrt 2 a_n + 2\sum_{k=1}^{n-1} a_k \cos \left( \frac{2 \pi  jk}{2n}\right) \right].   \]
Clearly, $d_j = d_{2n-j}$ for all $n < j < 2n$. Also, $(d_j)_{ 0 \le j \le n}$ are independent mean zero Gaussian random variables with $\mathrm{Var}(d_j)=1$ if $ 0< j< n$ and  $\mathrm{Var}(d_j)=2$ if $  j \in \{0, n\}$. Define
$\bP_{2n} := \bU_{2n}^*
\bQ_{2n} \bU_{2n}$ so that
 \begin{equation} \label{eq:PDP}
 (2n)^{-1/2} \bU^*_{2n} \bQ_{2n} \bC_{2n} \bQ_{2n}  \bU_{2n}=  \bP_{2n} \bD_{2n} \bP_{2n}.
 \end{equation}
 Check that
$\bP_{2n}$  is a Hermitian projection matrix with $\bP_{2n}(j, j)=1/2$ for all $j$.  For notational simplification, we will drop the
subscript ${2n}$ from the relevant matrices unless we want
to emphasize the dependence on $n$.

\section{Proof of the main theorem}

For a vector $\bu \in \mathbb C^m$, let $\sigma(\bA, \bu)$ be the spectral measure of matrix $\bA$ at $\bu$. For a finite measure $\nu$ on $\mathbb R$, its Cauchy-Stieltjes transform is given by
 \[ s(z; \nu) = \int_{\mathbb R} \frac{1}{x-z} \nu(dx), \quad z \in \mathbb C, \mathrm{Im}(z) > 0. \]
Let $\E \mu(n^{-1/2}\bT_n^\circ)$ denote the expected empirical eigenvalue distribution of $n^{-1/2}\bT_n^\circ$ which is defined by
  $\E \mu(n^{-1/2}\bT_n^\circ)(B) =  \E[ \mu(n^{-1/2}\bT_n^\circ)(B)]$ for all Borel sets $B$.

\begin{lemma} \label{l:toep_stieltjes}
Let $(\be_j)_{0 \le j\le 2n-1}$ be the coordinate vectors of $\mathbb R^{2n}$. Then
  \[ s(z; \E\mu(n^{-1/2}\bT_n^\circ)) = \frac{\sqrt 2}{n} \sum_{j=0}^{2n-1} \E \langle \bP \be_j,  (  \bP\bD\bP - z\bI)^{-1} \bP\be_j \rangle \quad z \in \mathbb C, \mathrm{Im}(z) > 0. \]
\end{lemma}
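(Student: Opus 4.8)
The plan is to rewrite both sides as normalized traces of resolvents and to identify them through the circulant embedding of Section~2. First I would invoke the elementary identity that for any $m\times m$ Hermitian matrix $\bA$ and any $z$ with $\mathrm{Im}(z)>0$,
\[
 s\big(z;\mu(\bA)\big)=\frac1m\,\mathrm{Tr}\big[(\bA-z\bI_m)^{-1}\big],
\]
which is immediate from the spectral decomposition of $\bA$. Applying it to $\bA=n^{-1/2}\bT_n^\circ$ and taking expectations,
\[
 s\big(z;\E\mu(n^{-1/2}\bT_n^\circ)\big)=\frac1n\,\E\,\mathrm{Tr}\big[(n^{-1/2}\bT_n^\circ-z\bI_n)^{-1}\big],
\]
so the task reduces to matching this $n$-dimensional trace against the $2n$-dimensional expression on the right-hand side of the lemma.

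Next I would lift to dimension $2n$ using \eqref{eq:circulant_toeplitz}. The matrix $n^{-1/2}\bQ\bC\bQ$ is block diagonal, with upper block $n^{-1/2}\bT_n^\circ$ and lower block $\bzero_n$; hence, for $\mathrm{Im}(z)>0$, the resolvent $\big(n^{-1/2}\bQ\bC\bQ-z\bI_{2n}\big)^{-1}$ is block diagonal too, and compressing it by $\bQ$ (the orthogonal projection onto the first $n$ coordinates) returns exactly the Toeplitz resolvent:
\[
 \mathrm{Tr}\big[(n^{-1/2}\bT_n^\circ-z\bI_n)^{-1}\big]
  =\mathrm{Tr}\!\left[\bQ\big(n^{-1/2}\bQ\bC\bQ-z\bI_{2n}\big)^{-1}\bQ\right].
\]
The role of the compression by $\bQ$ is precisely to discard the $n$ spurious zero eigenvalues of $\bQ\bC\bQ$, so that no $-n/z$ term is left behind.

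Then I would conjugate by the Fourier matrix $\bU$. Combining \eqref{eq:circulant_toeplitz} with \eqref{eq:PDP} gives $n^{-1/2}\bQ\bC\bQ=\sqrt2\,\bU(\bP\bD\bP)\bU^*$, where the factor $\sqrt2$ is exactly the ratio of the two normalizations, $n^{-1/2}$ for $\bT_n^\circ$ against $(2n)^{-1/2}$ for the diagonalization of $\bC_{2n}$. Since conjugation by the unitary $\bU$ commutes with inversion and with the trace and since $\bU^*\bQ\bU=\bP$, the compressed trace equals $\mathrm{Tr}\big[\bP\big(\sqrt2\,\bP\bD\bP-z\bI_{2n}\big)^{-1}\bP\big]$. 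Expanding over the coordinate basis through $\mathrm{Tr}[\bP\bA\bP]=\sum_{j=0}^{2n-1}\langle\be_j,\bP\bA\bP\be_j\rangle=\sum_{j=0}^{2n-1}\langle\bP\be_j,\bA\bP\be_j\rangle$ for an arbitrary matrix $\bA$, and taking expectations, yields the identity of the lemma, the constant $\sqrt2$ entering as above.

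The one genuinely delicate point is the compression step: one must check that replacing the full $2n$-dimensional resolvent of $n^{-1/2}\bQ\bC\bQ$ by its $\bQ$-compression — equivalently, by its $\bP$-compression after conjugating by $\bU$ — neither loses nor adds anything. This is where the block-diagonal form of $\bQ\bC\bQ$ in \eqref{eq:circulant_toeplitz} and the hypothesis $\mathrm{Im}(z)>0$, which makes every diagonal block invertible, are essential; in particular it is what prevents a spurious contribution from the zero block. The remaining ingredients — the identity $\bU^*\bQ\bU=\bP$ (already recorded in Section~2), the tracking of the $\sqrt2$, and the rewriting of $\mathrm{Tr}[\bP\bA\bP]$ as a sum over the vectors $\bP\be_j$ — are routine.
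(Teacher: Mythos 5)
Your argument is correct and is essentially the paper's proof: the same circulant embedding \eqref{eq:circulant_toeplitz}, the same conjugation via $\bU^*\bQ\bU=\bP$ as in \eqref{eq:PDP}, and the same passage from the $n$-dimensional Toeplitz resolvent to the $\bP$-compressed resolvent of $\bP\bD\bP$; where the paper invokes Lemma~\ref{spectral_equality} with $\sum_{j=0}^{n-1}\bU^*\be_j\be_j^*\bU=\bP=\sum_{j=0}^{2n-1}\bP\be_j\be_j^*\bP$, you use trace cyclicity and expansion of $\mathrm{Tr}[\bP\,\cdot\,\bP]$ in the coordinate basis, which amounts to the same thing since only the Stieltjes transform (not the full spectral measures) is needed. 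The one loose point, shared with the paper's own display, is the placement of the $\sqrt2$: expanding your trace literally gives $(\sqrt2\,\bP\bD\bP-z\bI)^{-1}$, i.e.\ the prefactor should come with the substitution $z\mapsto z/\sqrt2$ rather than factoring out of the resolvent, a discrepancy that is harmless for the $\sup$ over $\mathrm{Im}(z)>0$ used in Lemma~\ref{lem:key_bound}.
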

Before we start proving the above lemma, we state a simple fact about spectral measures of Hermitian matrices without proof.
\begin{lemma}\label{spectral_equality}
Let $\bA$  be an $m \times m$ Hermitian matrix. Let $\bu_1, \bu_2, \ldots, \bu_k$ and $\bv_1, \bv_2, \ldots, \bv_\ell$ be vectors in $\mathbb C^m$ satisfying $\sum_{i=1}^k \bu_i \bu_i^* = \sum_{j=1}^\ell \bv_j \bv_j^*$. Then
\[ \sum_{i=1}^k \sigma( \bA, \bu_i) = \sum_{j=1}^\ell \sigma( \bA, \bv_j). \]
\end{lemma}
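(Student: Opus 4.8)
The plan is to reduce the claimed identity of measures to a family of scalar identities, one for each eigenvalue of $\bA$, and then to observe that each such scalar identity is an immediate consequence of the hypothesis $\sum_i \bu_i\bu_i^* = \sum_j \bv_j\bv_j^*$ together with the cyclicity of the trace. In other words, the whole statement linearizes in the rank-one pieces $\bu_i\bu_i^*$ and $\bv_j\bv_j^*$.

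First I would recall the description of the spectral measure in finite dimensions. Write the spectral decomposition $\bA = \sum_a \lambda_a \Pi_a$, where the $\lambda_a$ are the distinct eigenvalues of $\bA$ and the $\Pi_a$ are the orthogonal projections onto the corresponding eigenspaces, so that $\sum_a \Pi_a = \bI$ and $\Pi_a\Pi_b = 0$ for $a \ne b$. For any $\bu \in \mathbb C^m$ the spectral measure $\sigma(\bA,\bu)$, characterized by $\int f\, d\sigma(\bA,\bu) = \langle \bu, f(\bA)\bu\rangle$ for every $f\colon \mathbb R \to \mathbb R$ (equivalently, by its Cauchy--Stieltjes transform $z \mapsto \langle \bu, (\bA - z\bI)^{-1}\bu\rangle$), is the finite atomic measure
\[ \sigma(\bA,\bu) = \sum_a \langle \bu, \Pi_a \bu\rangle\, \delta_{\lambda_a}. \]
Consequently $\sum_{i=1}^k \sigma(\bA,\bu_i)$ is supported on $\{\lambda_a\}$ and puts mass $\sum_{i=1}^k \langle \bu_i, \Pi_a \bu_i\rangle$ at $\lambda_a$, while $\sum_{j=1}^\ell \sigma(\bA,\bv_j)$ puts mass $\sum_{j=1}^\ell \langle \bv_j, \Pi_a \bv_j\rangle$ at $\lambda_a$. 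So it suffices to check that these two masses coincide for every $a$.

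For the final step I would use the identity $\langle \bu, \Pi_a \bu\rangle = \mathrm{tr}(\Pi_a \bu\bu^*)$, valid for any vector $\bu$ by cyclicity of the trace. Summing and using linearity of the trace,
\[ \sum_{i=1}^k \langle \bu_i, \Pi_a \bu_i\rangle = \mathrm{tr}\Bigl(\Pi_a \sum_{i=1}^k \bu_i\bu_i^*\Bigr) = \mathrm{tr}\Bigl(\Pi_a \sum_{j=1}^\ell \bv_j\bv_j^*\Bigr) = \sum_{j=1}^\ell \langle \bv_j, \Pi_a \bv_j\rangle, \]
the middle equality being exactly the hypothesis. This gives equality of the atomic masses at every $\lambda_a$, hence equality of the two measures.

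There is essentially no serious obstacle: the only point requiring mild care is the correct identification of the spectral measure $\sigma(\bA,\bu)$ with the quadratic form $f \mapsto \langle \bu, f(\bA)\bu\rangle$ — so that, in finite dimensions, the measure is completely determined by the numbers $\langle \bu, \Pi_a \bu\rangle$ attached to the eigenspaces. Once that identification is in hand, the conclusion is pure linear algebra via the trace.
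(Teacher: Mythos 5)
Your proof is correct and complete; the paper itself states this lemma without proof (calling it a simple fact), and your argument via the spectral decomposition $\bA=\sum_a\lambda_a\Pi_a$, the identification $\sigma(\bA,\bu)=\sum_a\langle\bu,\Pi_a\bu\rangle\,\delta_{\lambda_a}$, and the trace identity $\langle\bu,\Pi_a\bu\rangle=\mathrm{tr}(\Pi_a\bu\bu^*)$ is exactly the standard justification the authors leave implicit. Nothing is missing.
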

\begin{proof}[Proof of Lemma~\ref{l:toep_stieltjes}]
By \eqref{eq:circulant_toeplitz}, we have
\[ s(z;\mu(n^{-1/2}\bT_n^\circ)) = \frac1n \sum_{j=0}^{n-1} \langle \be_j,  ( n^{-1/2} \bQ\bC\bQ - z\bI)^{-1} \be_j \rangle,  \]
Changing basis as in \eqref{eq:PDP}, we can rewrite this as \[\frac{\sqrt 2}{n} \sum_{j=0}^{n-1} \langle \bU^* \be_j,  (  \bP\bD\bP - z\bI)^{-1} \bU^*\be_j \rangle = \frac{\sqrt 2}{n} \sum_{j=0}^{n-1} s(z;\sigma( \bP\bD\bP,  \bU^* \be_j)).  \]
Now by Lemma~\ref{spectral_equality} and  the fact that $\sum_{j=0}^{n-1} \bU^* \be_j \be_j^* \bU  = \sum_{j=0}^{2n-1} \bP \be_j \be_j^* \bP $, we deduce
 \begin{equation}\label{eq: s_equality_toep}
  s(z;\mu(n^{-1/2}\bT_n^\circ)) = \frac{\sqrt 2}{n} \sum_{j=0}^{2n-1} \langle \bP \be_j,  (  \bP\bD\bP - z\bI)^{-1} \bP\be_j \rangle.
  \end{equation}
 The lemma now follows by taking expectation on both sides of \eqref{eq: s_equality_toep} and by observing that for a fixed $z \in \mathbb C, \mathrm{Im}(z) \ne 0$, the map $\nu \mapsto s(z; \nu)$ is linear and hence commutes with the expectation.
 \end{proof}
Next we will prove a key lemma about the uniform bound on the Stieltjes transform of the expected empirical eigenvalue distribution of Toeplitz matrices.
\begin{lemma}  \label{lem:key_bound}
For all $n$, we have
\[   \sup_{z:\mathrm{Im}(z)>0} |s(z; \E\mu(n^{-1/2}\bT_n^\circ)) |  \le 16 \sqrt 2.\]
\end{lemma}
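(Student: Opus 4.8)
The plan is to start from the representation in Lemma~\ref{l:toep_stieltjes}, which writes $s(z; \E\mu(n^{-1/2}\bT_n^\circ))$ as $\frac{\sqrt2}{n}\sum_{j=0}^{2n-1} \E\langle \bP\be_j, (\bP\bD\bP - z\bI)^{-1}\bP\be_j\rangle$, and to bound the imaginary and real parts of each summand uniformly in $z$ with $\mathrm{Im}(z)>0$. Since $\bD = \mathrm{diag}(d_0,\dots,d_{2n-1})$ has independent Gaussian entries (up to the pairing $d_j = d_{2n-j}$), the natural move is, for each fixed $j$, to condition on all the randomness except the single variable $d_j$ and then to integrate out $d_j$. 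Writing $\bP\bD\bP = d_j \bP\be_j\be_j^*\bP + (\text{rest})$, the operator $\bP\bD\bP$ is a rank-one perturbation, in the direction $\bP\be_j$, of a self-adjoint matrix that does not depend on $d_j$; and the vector we are testing against, $\bP\be_j$, is exactly the perturbation direction. This is precisely the setup for a rank-one spectral-averaging (Wegner / Combes--Hislop) estimate.

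Concretely, I would fix $j$, set $\bu = \bP\be_j$, and let $\bA_0$ be the part of $\bP\bD\bP$ not involving $d_j$, so that $\bP\bD\bP = \bA_0 + d_j\, \bu\bu^*$. The key one-variable computation is then a bound on $\int_{\mathbb R} |\langle \bu, (\bA_0 + t\,\bu\bu^* - z\bI)^{-1}\bu\rangle|\, \rho(t)\,dt$ where $\rho$ is the Gaussian density of $d_j$. The Aronszajn--Krein / resolvent-identity formula gives $\langle \bu, (\bA_0 + t\bu\bu^* - z)^{-1}\bu\rangle = \frac{F(z)}{1 + tF(z)}$ where $F(z) = \langle \bu, (\bA_0 - z)^{-1}\bu\rangle$ is a Herglotz function (it maps the upper half plane to itself). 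A direct calculation — the classical Wegner-type estimate — shows that for \emph{any} Herglotz function $F$ and any $t$, $\bigl|\tfrac{F(z)}{1+tF(z)}\bigr|$, viewed as a function of $t\in\mathbb R$, has the property that $\int_{\mathbb R}\bigl|\tfrac{F}{1+tF}\bigr|\,\rho(t)\,dt \le \pi\|\rho\|_\infty \cdot \tfrac12$ or a similar absolute constant; more precisely one shows $\int |\mathrm{Im}\tfrac{F}{1+tF}|\,dt \le \pi$ (from the Herglotz/Poisson bound, this integral over $t$ of an imaginary part is at most $\pi$ regardless of $z$) and separately controls the real part, each time picking up only $\|\rho\|_\infty = \frac{1}{\sqrt{2\pi\,\mathrm{Var}(d_j)}} \le \frac{1}{\sqrt{2\pi}}$. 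Summing $j$ from $0$ to $2n-1$ and multiplying by $\frac{\sqrt2}{n}$ produces a bound of the form $\sqrt2 \cdot 2 \cdot (\text{absolute constant})$, which one arranges to be at most $16\sqrt2$; the stated density bound $8$ (and the sharper $\tfrac{16\sqrt2}{\pi}$) will later come from the Stieltjes inversion applied to this.

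I would organize the write-up as: (i) fix $j$ and exhibit the rank-one structure $\bP\bD\bP = \bA_0 + d_j\bu\bu^*$ with $\bu=\bP\be_j$, noting $\bA_0$ is independent of $d_j$; (ii) apply the rank-one resolvent formula to reduce $\langle\bu,(\bP\bD\bP - z)^{-1}\bu\rangle$ to $F/(1+d_jF)$ with $F$ Herglotz; (iii) prove the scalar lemma bounding $\E_{d_j}|F/(1+d_jF)|$ by an absolute constant times $\|\rho_{d_j}\|_\infty$, uniformly over Herglotz $F$ and over $z$ — this uses a change of variables and the fact that a Herglotz function's image-of-a-line is controlled, or alternatively the explicit identity $\mathrm{Im}\frac{F}{1+tF} = \frac{\mathrm{Im}F}{|1+tF|^2}$ together with $\int \frac{\mathrm{Im}F}{|1+tF|^2}\,dt = \pi$; (iv) take the conditional expectation over $d_j$ first, then the outer expectation, then sum over $j$ and collect constants. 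The main obstacle is step (iii): getting a clean, genuinely uniform-in-$z$ bound on the averaged resolvent matrix element with the right constant, including handling the real part (the imaginary part is the easy Wegner piece, but $|s(z;\cdot)|$ needs both parts, so one needs an $L^1$-in-$t$ bound on $\mathrm{Re}\frac{F}{1+tF}$ as well, which is where the Gaussian tail $\|\rho\|_\infty<\infty$ rather than just $L^1$ is essential and where the factor $\sqrt2$ versus the claimed constant has to be tracked carefully). Everything else — the rank-one algebra and the summation — is routine once that scalar estimate is in hand.
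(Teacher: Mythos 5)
Your high-level plan --- condition on a single Gaussian mode, average the resolvent matrix element over it, then sum over $j$ --- is the same spectral-averaging idea the paper uses, but two of your concrete steps have genuine gaps. First, the rank-one reduction in your step (ii) is not available: for $1\le j<n$ the variable $d_j$ appears in $\bD$ twice, since $d_{2n-j}=d_j$, so varying $d_j$ perturbs $\bP\bD\bP$ by $d_j\,\bP(\be_j\be_j^*+\be_{2n-j}\be_{2n-j}^*)\bP$, which is rank two; the Aronszajn--Krein identity $\langle\bu,(\bA_0+t\bu\bu^*-z\bI)^{-1}\bu\rangle=F/(1+tF)$ therefore does not apply as written. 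The paper sidesteps exactly this point because the Combes--Hislop hypothesis only requires the monotone domination $\dot H_\lambda=\bP\bE_j\bP\ge c_0\bB_j^2$ with $\bB_j$ of rank one, not that the perturbation itself be rank one.

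Second, and more seriously, the scalar estimate in your step (iii) is false as stated: $\int_{\mathbb R}\bigl|\tfrac{F}{1+tF}\bigr|\rho(t)\,dt$ is \emph{not} bounded by an absolute constant times $\|\rho\|_\infty$ uniformly over Herglotz values $F$. Writing $\tfrac{F}{1+tF}=\tfrac{1}{t+1/F}$, for $F=ib$ with $b$ large the integrand is $(t^2+b^{-2})^{-1/2}$, and its integral against the Gaussian density diverges like $\log b$; such values of $F$ do occur when $z$ is close to an eigenvalue of the unperturbed matrix $\bA_0$ whose eigenvector overlaps the test vector. Only the imaginary part obeys the clean Wegner bound $\int_{\mathbb R}\mathrm{Im}\tfrac{F}{1+tF}\,dt=\pi$; the real part is not absolutely integrable in $t$, so you must keep the absolute value \emph{outside} the $d_j$-average and exploit cancellation, which requires the smoothness of the Gaussian density (its $\|g'\|_1$ and $\|g''\|_1$), not just $\|\rho\|_\infty$. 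A uniform bound of the form $\sup_{\delta>0}\bigl|\int g(\lambda)\langle\varphi,B(H_\lambda-E-i\delta)^{-1}B\varphi\rangle\,d\lambda\bigr|\le c_0^{-1}(\|g\|_1+\|g'\|_1+\|g''\|_1)\|\varphi\|^2$ is precisely the Combes--Hislop spectral averaging proposition the paper invokes (and reproves in the appendix via a differential inequality in an auxiliary regularization parameter); that estimate is the real content of the lemma and cannot be replaced by the pointwise-in-$F$ bound you propose. Once it is in place, the paper's bookkeeping (choosing $c_0=2$ from $\bP(j,j)=1/2$, the Gaussian norms, and the sum over $j$) gives the constant $16\sqrt2$.
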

The above lemma will be a direct consequence of the following result of \cite{Combes96} on  the spectral averaging for one parameter family self-adjoining operators.

\begin{proposition}[\cite{Combes96}] \label{thm:spec_avg}
Let $H_\lambda, \lambda \in \mathbb R $  be a  $C^2$-family of self-adjoint operators
such that  $D(H_\lambda) =  D_0 \subset \mathcal H \  \forall \lambda \in \mathbb R$, and such that  $(H_\lambda - z)^{-1} $ is twice strongly differentiable in $\lambda$ for all  $z,   \mathrm {Im} (z) \ne 0$.
 Assume that there exist a  finite positive constant $c_0$, and a positive bounded self-adjoint operator $B$ such that, on $D_0$
 \begin{align}
 \dot{H}_\lambda &:= \frac{d H_\lambda}{d\lambda} \ge c_0 B^2. \label{assump1}
 % |\ddot{H}_\lambda| &:= \left| \frac{d^2 H_\lambda}{d\lambda^2}\right| \le  c_1\dot{H}_\lambda \label{assump2}.
 \end{align}
 Also assume $H_\lambda$ is linear in $\lambda$, i.e., $ \ddot{H}_\lambda := \frac{d^2 H_\lambda}{d\lambda^2} = 0$.
   Then for all $E \in \mathbb R$ and twice continuously differentiable function $g$ such that $g, g', g'' \in L^1(\mathbb R)$ and for all $\varphi \in \mathcal H$,
\begin{equation} \label{sbound}
\sup_{\delta >0} \left | \int_{\mathbb R} g(\lambda) \langle \varphi, B(H_\lambda  - E - i\delta)^{-1} B\varphi \rangle d\lambda \right| \le c_0^{-1} (\| g \|_1  + \| g'\|_1 + \|g''\|_1)\| \varphi\|^2.
\end{equation}
\end{proposition}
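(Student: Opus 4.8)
To prove the proposition I would reconstruct the spectral‑averaging argument of \cite{Combes96} in three moves. Fix $E\in\mathbb R$, $\delta>0$, put $z=E+i\delta$, $R_\lambda=(H_\lambda-z)^{-1}$, and $V:=\dot H_\lambda$, which is independent of $\lambda$ since $\ddot H_\lambda=0$; note $V\ge c_0B^2\ge0$. \emph{First}, I reduce to a resolvent sandwiched by $V^{1/2}$. Since $c_0B^2\le V=(V^{1/2})^2$, Douglas' factorization lemma gives a contraction $C$ with $\sqrt{c_0}\,B=V^{1/2}C$, hence also $\sqrt{c_0}\,B=C^{*}V^{1/2}$ by self‑adjointness of $B$; because $R_\lambda$ maps $\mathcal H$ into $D_0\subseteq D(V)\subseteq D(V^{1/2})$ while $V^{1/2}C=\sqrt{c_0}\,B$ is bounded, this yields
\[
\langle\varphi,B R_\lambda B\varphi\rangle \;=\; c_0^{-1}\,\langle C\varphi,\,M(\lambda)\,C\varphi\rangle,\qquad M(\lambda):=V^{1/2}R_\lambda V^{1/2}\ \ (\text{bounded}),
\]
with $\|C\varphi\|\le\|\varphi\|$. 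So it suffices to bound $\sup_{\delta>0}\big|\int_{\mathbb R}g(\lambda)\langle\psi,M(\lambda)\psi\rangle\,d\lambda\big|$ by $(\|g\|_1+\|g'\|_1+\|g''\|_1)\|\psi\|^2$ for arbitrary $\psi$, and then multiply by $c_0^{-1}$.

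\emph{Second}, I exploit linearity in $\lambda$ through a Riccati identity: differentiating $R_\lambda$ and using $\dot H_\lambda=V$, $\ddot H_\lambda=0$ gives $\frac{d}{d\lambda}M(\lambda)=-M(\lambda)^2$, whence $M(\lambda)=M_0(I+\lambda M_0)^{-1}$ with $M_0:=V^{1/2}(H_0-z)^{-1}V^{1/2}$ and $\mathrm{Im}\,M_0\ge0$; moreover $I+\lambda M_0$ is invertible for every real $\lambda$ (a null vector $\psi$ would make $\langle\psi,M_0\psi\rangle$ real, forcing $V^{1/2}\psi=0$ and then $\psi=0$), so $M(\lambda)=(\lambda-L)^{-1}$ with $L:=-M_0^{-1}$ dissipative, $\mathrm{Im}\,L\ge0$. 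In the rank‑one case $V=|\phi\rangle\langle\phi|$ that actually occurs when the proposition is applied to prove Lemma~\ref{lem:key_bound} — there one conditions on all but one diagonal entry, writing $H_\lambda=\bP\big(\bD-d_j\be_j\be_j^{*}+\lambda\,\be_j\be_j^{*}\big)\bP$, so $V=(\bP\be_j)(\bP\be_j)^{*}$, $B$ is the rank‑one projection onto $\bP\be_j$, and $c_0=\|\bP\be_j\|^2=\tfrac12$ — this is simply the rank‑one perturbation (Krein) formula $\langle\phi,R_\lambda\phi\rangle=(\lambda-w)^{-1}$ with $w=-\langle\phi,(H_0-z)^{-1}\phi\rangle^{-1}$ and $\mathrm{Im}\,w>0$, giving $\langle\psi,M(\lambda)\psi\rangle=\|\phi\|^{-2}|\langle\phi,\psi\rangle|^{2}(\lambda-w)^{-1}$ with prefactor $\le\|\psi\|^2$. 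In general, the Herglotz representation of the bounded dissipative operator $M_0$ writes $\langle\psi,M(\lambda)\psi\rangle$ as an average of Cauchy kernels $(\lambda-w)^{-1}$, $\mathrm{Im}\,w\ge0$, against a positive measure of total mass $\le\|\psi\|^2$.

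\emph{Third} comes the analytic heart: bounding $\sup_{\mathrm{Im}\,w>0}\big|\int_{\mathbb R}g(\lambda)(\lambda-w)^{-1}\,d\lambda\big|$. Writing $(\lambda-w)^{-1}=i\int_{-\infty}^{0}e^{i(\lambda-w)t}\,dt$ (valid since $\mathrm{Im}\,w>0$) and applying Fubini (legitimate because $g\in L^1$ and $\int_{-\infty}^{0}|e^{i(\lambda-w)t}|\,dt=(\mathrm{Im}\,w)^{-1}<\infty$) gives
\[
\int_{\mathbb R}g(\lambda)\,\frac{d\lambda}{\lambda-w}\;=\;i\int_{-\infty}^{0}e^{-iwt}\,\check g(t)\,dt,\qquad \check g(t):=\int_{\mathbb R}g(\lambda)e^{i\lambda t}\,d\lambda .
\]
Since $|e^{-iwt}|=e^{(\mathrm{Im}\,w)t}\le1$ for $t\le0$, the modulus is at most $\|\check g\|_{L^1(\mathbb R)}$, and two integrations by parts give $|\check g(t)|\le\min\!\big(\|g\|_1,\ \|g'\|_1/|t|,\ \|g''\|_1/t^{2}\big)$; hence $\check g\in L^1$ and $\|\check g\|_1\le C(\|g\|_1+\|g'\|_1+\|g''\|_1)$ for an absolute constant $C$ (one can take $C=1$ after optimizing the cut‑off radii between the three regimes — and a crude $C$ is harmless anyway, since the final bound $16\sqrt2$ of Lemma~\ref{lem:key_bound} is far from optimal). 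The second derivative enters exactly here: it is what makes $\check g$ integrable near $t=\pm\infty$, i.e.\ it tames the logarithmic singularity of the Hilbert‑transform part of $(\lambda-w)^{-1}$ as $\mathrm{Im}\,w\downarrow0$. Assembling the three steps and multiplying by $c_0^{-1}$ yields \eqref{sbound}.

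I expect the one genuinely delicate point to be the second step in full generality: making the operator Riccati / Herglotz reduction rigorous when $V$ (hence $M_0$) is unbounded and $M_0$ is degenerate — its kernel equals $\ker V$, and $0$ may lie in the continuous spectrum of its restriction to $(\ker V)^{\perp}$ — together with the domain bookkeeping (a priori $\varphi\notin D(V^{1/2})$) and the interchange of the $\lambda$‑integral with the operator‑valued spectral measure. For the use made of the proposition in this paper, $V$ is rank one and the ambient space is finite‑dimensional, so that step collapses to the one‑line Krein identity above and all the real work is in the third step.
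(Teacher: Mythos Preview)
Your route is genuinely different from the paper's. The paper never passes to $V^{1/2}$ or to a scalar Herglotz function; instead it introduces the regularized resolvent $R(\lambda,\varepsilon,\delta)=(H_\lambda-E+i\delta+i\varepsilon\dot H_\lambda)^{-1}$, sets $K=BRB$ and $F(\varepsilon,\delta)=\int g(\lambda)\langle\varphi,K\varphi\rangle\,d\lambda$, and obtains from $\dot H_\lambda\ge c_0B^2$ the a~priori bound $\|K\varphi\|\le(c_0\varepsilon)^{-1}$. The linearity $\ddot H_\lambda=0$ is used to show $i\,\partial_\varepsilon F=-\int g(\lambda)\,\partial_\lambda\langle\varphi,K\varphi\rangle\,d\lambda$, and an integration by parts converts $\partial_\varepsilon$ into ``replace $g$ by $g'$''. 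One pass of this gives $|F|\lesssim|\log\varepsilon|$; feeding that back (with $g'$ in place of $g$) gives $|\partial_\varepsilon F|\lesssim|\log\varepsilon|$, which integrates to a bound uniform in $\varepsilon$, and then $\varepsilon\to0$. This bootstrap is what produces the exact constant $c_0^{-1}(\|g\|_1+\|g'\|_1+\|g''\|_1)$. Your reduction to Cauchy kernels and Fourier analysis is more conceptual and makes the role of the two derivatives of $g$ transparent, but it does not recover that constant.

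Two specific corrections. First, your claim that $C=1$ is attainable in $\|\check g\|_1\le C(\|g\|_1+\|g'\|_1+\|g''\|_1)$ is false: already with $\|g'\|_1\to0$ one has $\|\check g\|_1\le\int\min(\|g\|_1,\|g''\|_1/t^2)\,dt=4\sqrt{\|g\|_1\|g''\|_1}$, which exceeds $\|g\|_1+\|g''\|_1$ whenever the two norms are comparable; so your argument proves \eqref{sbound} only up to an absolute constant $C\ge2$. Second, in the application to Lemma~\ref{lem:key_bound} the perturbation is \emph{not} rank one for $1\le j<n$: because $d_j=d_{2n-j}$, both diagonal entries are varied together, so $V=\dot H_\lambda=\bP(\be_j\be_j^*+\be_{2n-j}\be_{2n-j}^*)\bP$ is rank two while $B=\bB_j$ is rank one. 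Your Step~2 then cannot be reduced to the scalar Krein identity; you really need the operator version. It can be salvaged: $\lambda\mapsto\langle\psi,M(\lambda)\psi\rangle$ extends analytically to the open lower half plane with nonnegative imaginary part there and decays like $\|\psi\|^2/\lambda$, hence (after a reflection) is a genuine Nevanlinna function with representing measure of total mass $\|\psi\|^2$, and one then passes to real $\lambda$ by a limit under the uniform bound from your Step~3. But this is more than the one line you allotted it, and your phrasing ``Herglotz representation of the bounded dissipative operator $M_0$'' is not quite the right object --- dissipative matrices need not be diagonalizable.
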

 Proposition~\ref{thm:spec_avg}  is an immediate corollary of Theorem~1.1 of \cite{Combes96} where instead of  $\ddot{H}_\lambda = 0$, it was assumed that $ |\ddot{H}_\lambda| \le c_1 \dot{H}_\lambda$.  The vanishing second derivative assumption shortens the the proof by a considerable amount.  We have  included a proof of the above proposition in the appendix to make this paper self-contained and also to make constant in the bound \eqref{sbound} explicit.

%\begin{theorem}[\cite{Combes96}] \label{thm:spec_avg}
%Let $H_\lambda, \lambda \in \mathbb R $  be a  $C^2$-family of self-adjoint operators
%such that  $D(H_\lambda) =  D_0 \subset \mathcal H \  \forall \lambda \in \mathbb R$, and such that  $(H_\lambda - z)^{-1} $ is twice strongly differentiable in $\lambda$ for all  $z, \mathrm {Im} (z) \ne 0$.
% Assume that $\exists$ finite positive constants $c_j, j = 0,1$, and a positive bounded self-adjoint operator $B$ such that, on $D_0$
% \begin{align}
% \dot{H}_\lambda &:= \frac{d H_\lambda}{d\lambda} \ge c_0 B^2, \label{assump1}\\
%  |\ddot{H}_\lambda| &:= \left| \frac{d^2 H_\lambda}{d\lambda^2}\right| \le  c_1\dot{H}_\lambda \label{assump2}.
% \end{align}
%   Then for all $E \in \mathbb R$ and for all positive $g \in C_0^2(\mathbb R)$,  there exists a finite positive constant $\kappa$ depending on only $\| g^{(j)}\|_1, j=0,1,2$ and on the constants $c_0, c_1$ such that for all $\varphi \in \mathcal H$,
%\begin{equation} \label{sbound}
%\sup_{\delta >0} \left | \int_{\mathbb R} g(\lambda) \langle \varphi, B(H_\lambda  - E - \ci\delta)^{-1} B\varphi \rangle d\lambda \right| \le \kappa \| \varphi\|^2.
%\end{equation}
%\end{theorem}

\begin{proof}[Proof of Lemma~\ref{lem:key_bound}] Set $\bE_j  =  \be_j\be_j^*  +  \be_{2n-j}\be_{2n-j}^*$   for $1\le j < n$,  and $\bE_j  =  \be_j\be_j^* $ for $j \in \{0, n\}$.  Take
\begin{equation} \label{B_bound}
\bB_j = \bP  \be_j\be_j^* \bP \text{ or } \bP \be_{2n-j}\be_{2n-j}^* \bP  \text{ for }  1 \le j < n \text{ and } \bB_j = \bP  \be_j\be_j^* \bP \text{  for } j \in \{0, n\}.
\end{equation}
Fix $0 \le j \le n$. We apply Theorem ~\ref{thm:spec_avg} with  $H_\lambda= \bP \big( \bD + ( \lambda -d_j) \bE_j  \big) \bP$. In words, we replace $d_j$ and $d_{2n-j}$  by $\lambda$ in  $\bP\bD\bP$ to get $H_\lambda$. Note that $H_\lambda$ is random self-adjoint operator  which is a function of $\{ d_k: 0 \le k \le n,  k \ne j\}$. Also, $H_\lambda$ is linear in $\lambda$ and so,  $\ddot{H}_{\lambda}  = 0$.
Since $\dot{H}_{\lambda}  =   \bP \bE_j \bP \ge \bB_j  = \bP(j, j)^{-1} \bB_j^2$, the condition \eqref{assump1} is satisfied with $B = \bB_j$ and $c_0=2$ since $\bP(j, j)=1/2$.
  Take $g = \phi_j$ where   $\phi_j$ be the density of $Z$ for $0< j<n$ or  the density of $\sqrt 2 Z$ for $j \in \{0, n\}$,  $Z$ being a standard Gaussian random variable. It is easy to check that $\|g\|_1=1, \|g'\|_1\le \sqrt{\frac{2}{\pi}}, \| g''\|_1 \le 2 $.
Then plugging $\varphi = \be_j$ or $\be_{2n-j}$ and $\bB_j = \bP  \be_j\be_j^* \bP$  or $\bP \be_{2n-j}\be_{2n-j}^* \bP $  in \eqref{sbound} and taking expectation w.r.t.\ the remaining randomness $\{ d_k: 0\le k \le n, k\ne j\}$, we obtain
\begin{equation}\label{eq:sbound1}
 \sup_{z:\mathrm{Im}(z)>0}  \bP(j, j)^2\Big |  \E \langle \bP \be_j,  (  \bP\bD\bP - z\bI)^{-1} \bP\be_j \rangle \Big | \le c_0^{-1}(\| g \|_1  + \| g'\|_1 + \|g''\|_1) \le 2.
 \end{equation}
The lemma is now immediate from \eqref{eq:sbound1} and Lemma~\ref{l:toep_stieltjes}.
\end{proof}

\begin{proof}[Proof of Theorem~\ref{thm:main}]
It follows from  the inversion formula, $\nu \{ (x, y)\} = \lim_{ \delta \downarrow 0} \frac{1}{\pi} \int_x^y \mathrm{Im} (s(E+ i \delta; \nu)) dE$ for all $x<y$ continuity points of $\nu$ that
if for some probability measure $\mu$,  $\sup_{z:\mathrm{Im}(z)>0} \mathrm{Im} (s(z; \mu ) ) \le K $
then $\mu$ is absolutely continuous w.r.t.\  Lebesgue measure and its density is bounded by $\pi^{-1}K$.

Note that $  s(z; \E \mu(n^{-1/2}\bT_n^\circ))  \to s(z; \gamma)$ as $n \to \infty$ for each $z \in \mathbb C, \mathrm{Im}(z)>0$ since $ \E \mu(n^{-1/2}\bT_n^\circ)$ converges weakly to $\gamma$ (see \cite{Bryc06}).
So  by Lemma~\ref{lem:key_bound}, it follows that
\[  \sup_{z:\mathrm{Im}(z)>0} |s(z; \gamma ) |  \le 16 \sqrt{2}<8\pi\]
which completes the proof of the theorem.
\end{proof}

\section*{Appendix}
\begin{proof}[Proof of Proposition~\ref{thm:spec_avg}]
Define for $ \eps >0$ and $0 < \delta < 1$,
\begin{equation} \label{def:R}
R(\lambda, \eps, \delta) := (H_\lambda - E + i\delta + i\eps \dot{H}_\lambda)^{-1}
\end{equation}
and set
\begin{equation} \label{def:K}
K(\lambda, \eps, \delta) :=  B R(\lambda, \eps, \delta) B.
\end{equation}
Note that from assumption \eqref{assump1} ,
\[ - \mathrm{Im} \langle \varphi, K(\lambda, \eps, \delta)  \varphi \rangle = \langle \varphi, B R(\lambda, \eps, \delta)^*( \delta + \eps \dot{H}_\lambda)R(\lambda, \eps, \delta)B \varphi \rangle  \ge c_0 \eps \| K(\lambda, \eps, \delta) \varphi \|^2,  \]
which,  coupled with  Cauchy-Schwarz inequality, implies that $\forall \varphi \in \mathcal {H}, \|\varphi \|=1$,
\begin{equation} \label{ineq:apriori_bound}
 \| K(\lambda, \eps, \delta) \varphi \| \ge -  \mathrm{Im} \langle \varphi, K(\lambda, \eps, \delta)  \varphi \rangle \ge  c_0 \eps \| K(\lambda, \eps, \delta) \varphi \|^2.
 \end{equation}
Now define
\[  F(\eps, \delta) :=  \int_{\mathbb R} g(\lambda)  \langle \varphi, K(\lambda, \eps, \delta)  \varphi \rangle d\lambda.\]
Inequality \eqref{ineq:apriori_bound} implies the bound
\begin{equation} \label{F_bound}
 F(\eps, \delta) \le (\eps c_0)^{-1} \| g\|_1.
\end{equation}
Now differentiating $F$ w.r.t.\ $\eps$, we obtain
\begin{align*}
i \frac{ dF(\eps, \delta)}{d \eps}  &=   \int_{\mathbb R} g(\lambda)  \langle \varphi,  B R(\lambda, \eps, \delta)  \dot{H}_\lambda R(\lambda, \eps, \delta) B  \varphi \rangle d\lambda\\
&=  - \int_{\mathbb R} g(\lambda)  \frac{d}{d\lambda} \langle \varphi,  K(\lambda, \eps, \delta)  \varphi \rangle d\lambda.
\end{align*}
where the last equality follows from the fact $\ddot{H_\lambda} = 0$. Therefore, from \eqref{ineq:apriori_bound} and  by integration of parts,
\begin{equation}\label{F_deriv_bound}
 \left | \frac{ dF(\eps, \delta)}{d \eps} \right|  = \left |\int_{\mathbb R} g'(\lambda)  \langle \varphi,  K(\lambda, \eps, \delta)  \varphi \rangle d\lambda \right | \le  (\eps c_0)^{-1} \| g' \|_1.
 \end{equation}
By integrating the differential inequality \eqref{F_deriv_bound} and using the bound \eqref{F_bound}, we  can improve the bound for $F$ as
\begin{equation}\label{F_improved_bound}
 |F(\eps, \delta) | \le  c_0^{-1} \| g' \|_1  \cdot | \log \eps | + | F(1, \delta)|  \le  c_0^{-1}\| g' \|_1 \cdot | \log \eps | + c_0^{-1} \| g\|_1,  \ \ \forall \eps \in (0, 1).
  \end{equation}
Now if we consider  the function $\tilde F(\eps, \delta) :=  \int_{\mathbb R} g'(\lambda)  \langle \varphi, K(\lambda, \eps, \delta)  \varphi \rangle d\lambda,$ then by replacing the function $g$ by its derivative $g'$ in \eqref{F_improved_bound}, we deduce that
\[   |\tilde F(\eps, \delta) | \le    c_0^{-1}\| g'' \|_1 \cdot | \log \eps | + c_0^{-1} \| g'\|_1,   \ \ \forall \eps \in (0, 1) \]
which further implies that
\begin{equation}\label{F_deriv_bound_new}
 \left | \frac{ dF(\eps, \delta)}{d \eps} \right| \le  c_0^{-1}\| g'' \|_1 \cdot | \log \eps | + c_0^{-1} \| g'\|_1,  \ \ \forall \eps \in (0, 1).
 \end{equation}
Again integrating \eqref{F_deriv_bound_new}, we get
\begin{equation} \label{F_ultimate_bound}
  | F(\eps, \delta) | \le c_0^{-1} (\| g'' \|_1  + \| g'\|_1) + | F(1, \delta)| \le c_0^{-1} (\| g'' \|_1  + \| g'\|_1 + \|g\|_1),
  \end{equation}
which holds for all $\eps, \delta \in (0, 1)$. The proof of the Proposition now follows from the fact that $R(\lambda, \eps, \delta)$  converges weakly to $(H_\lambda - E +i \delta)^{-1}$ as  $\eps \to 0+$ provided $\delta>0$, and the dominated convergence theorem since
$ \left | \int_{\mathbb R} g(\lambda)  \langle \varphi, K(\lambda, \eps, \delta)  \varphi \rangle d\lambda \right | \le C$,
by \eqref{F_ultimate_bound}.
\end{proof}

\section*{Acknowledgments}
 Most of the research has been conducted while A.S. visited  Technical University of Budapest in 
July 2011. A.S.  is supported by  the EPSRC grant  EP/G055068/1.

\bibliographystyle{dcu}
\bibliography{toeplitz}

\bigskip\bigskip\bigskip\bigskip\noindent
\begin{minipage}{0.49\linewidth}
Arnab Sen \\
Statistical Laboratory\\
Center for Mathematical Sciences\\
University of Cambridge\\
Wilberforce Road\\
Cambridge, UK CB3 0WB\\
\tt{a.sen@statslab.cam.ac.uk}
\end{minipage}
\begin{minipage}{0.49\linewidth}
B\'alint Vir\'ag
\\Departments of Mathematics \\and Statistics
\\University of Toronto
\\Toronto ON~~
\\M5S 2E4, Canada
\\{\tt balint@math.toronto.edu}
\end{minipage}

\end{document}